\newtheorem{thm}{Theorem}[section]
\newtheorem{lemma}[thm]{Lemma}
\newtheorem{proposition}[thm]{Proposition}
\theoremstyle{definition}
\newtheorem{definition}[thm]{Definition}
\newtheorem{example}[thm]{Example}
\newtheorem*{mt}{Main Theorem}
\numberwithin{equation}{section}
\newcommand{\lt}{\left}
\newcommand{\rt}{\right}
\newcommand{\bpm}{\begin{pmatrix}}
\newcommand{\epm}{\end{pmatrix}}
\newcommand{\bsm}{\lt(\begin{smallmatrix}}
\newcommand{\esm}{\end{smallmatrix}\rt)}
\newcommand{\beq}{\begin{equation}}
\newcommand{\eeq}{\end{equation}}
\newcommand{\bmat}{\begin{matrix*}}
\newcommand{\emat}{\end{matrix*}}
\newcommand{\Z}{\ensuremath{\mathbb{Z}}}
\newcommand{\N}{\ensuremath{\mathbb{N}}}
\newcommand{\R}{\ensuremath{\mathbb{R}}}
\let\turc\c
\renewcommand{\c}{\mathfrak{c}}
\newcommand{\vep}{\varepsilon}
\newcommand{\cF}{\ensuremath{\mathcal{F}}}
\newcommand{\inv}{^{-1}}
\newcommand{\mr}{\ensuremath{\mathbb R}}
\newcommand{\intR}{\int_{-\infty}^{\infty}}
\begin{document}

\title[Oscillatory integrals with uniformity]{Oscillatory integrals with uniformity in parameters}


\author{Eren Mehmet K{\i}ral}
\address{Eren Mehmet K{\i}ral \\
Department of Mathematics \\
	  Texas A\&M University \\
	  College Station \\
	  TX 77843-3368 \\
		U.S.A.}
		\curraddr{Sophia University \\
Department of Information and Communication Sciences \\
Faculty of Science and Technology \\
7-1 Kioicho, Chiyoda-ku \\
Tokyo 102-8554 \\
Japan}
\email{erenmehmetkiral@protonmail.com}
\urladdr{https://ekiral.github.io/}

\author{Ian Petrow}
\address{Ian Petrow \\
ETH Z\"urich - Departement Mathematik \\
HG G 66.4 \\
R\"amistrasse 101 \\
8092 Z\"urich \\
Switzerland}
\email{ian.petrow@math.ethz.ch}
\urladdr{https://people.math.ethz.ch/{\raise.17ex\hbox{$\scriptstyle\sim$}}petrowi/}

\author{Matthew P. Young}
\address{Matthew P. Young \\
Department of Mathematics \\
	  Texas A\&M University \\
	  College Station \\
	  TX 77843-3368 \\
		U.S.A.}	
\email{myoung@math.tamu.edu}
\urladdr{http://www.math.tamu.edu/{\raise.17ex\hbox{$\scriptstyle\sim$}}myoung/}

\subjclass[2010]{41A60, 42A38}

\keywords{Oscillatory integrals, Stationary phase}

\thanks{
 The first author was partially supported by an AMS-Simons travel grant, and thanks RIKEN iTHEMS for access to their office space.\\
 The second author was supported by Swiss National Science Foundation grant PZ00P2\_168164. \\
 This material is based upon work of M.Y. supported by the National Science Foundation under agreement No. DMS-1401008.  Any opinions, findings and conclusions or recommendations expressed in this material are those of the authors and do not necessarily reflect the views of the National Science Foundation.}

\maketitle


\begin{abstract}
We prove a sharp asymptotic formula for certain oscillatory integrals that may be approached using the stationary phase method.  The estimates are uniform in terms of auxiliary parameters, which is
crucial for application in analytic number theory.
\end{abstract}

\bigskip
\section{Introduction}
Exponential integrals occur in many problems in analytic number theory, including moments of $L$-functions, lattice-point counting, and the circle method (see e.g. \cite{GrahamKolesnik}, \cite{HuxleyLatticePoints}, \cite{IwaniecAnalyticBook}).  For some more advanced applications (e.g. in \cite{CI}) it is necessary to develop an asymptotic expansion of multi-dimensional integrals with uniformity in parameters.  For example, in their celebrated paper \cite{CI}, Conrey and Iwaniec are faced with an integral of the form \begin{equation}\label{eg1}\int_{\mr^3} w(x_1, x_2, x_3) e(2 \sqrt{tx_1 x_2 x_3} - x_1 \lambda_1 - x_2 \lambda_2 - x_3 \lambda_3) dx_1 dx_2 dx_3\end{equation}
 for some smooth dyadically supported weight function $w$. In \eqref{eg1} and other intended applications (see also  \cite{KiralYoung5thMoment}, \cite{PetrowYoung}), one is faced with an integral in several variables with several varying parameters, and needs an asymptotic expansion in terms of all of the parameters. In this paper, we  treat such integrals by iteratively applying the stationary phase method in one variable.

As such, this paper concerns the asymptotic behavior of oscillatory integrals of the form
\begin{equation*}
I = I(x_2, \dots, x_d;w,\phi) = \int_{\mathbb{R}} w(x_1, \dots, x_d) e^{i \phi(x_1, \dots, x_d)} dx_1,
\end{equation*}
where $w, \phi$ are smooth functions of $\R^d$ and $w$ has compact support.
We wish to understand the behavior of $I$ as $w$ and $\phi$ vary in certain families defined in terms of derivative bounds.  
One may then apply other oscillatory integral transforms in the auxiliary variables (such as additional stationary phase analyses, Fourier/Mellin transforms, or the integral transforms appearing in the Kuznetsov formula), which is a common technique in analytic number theory.  
The present work hopes to automate
the analysis of $I$ as much as possible, thereby shifting the mental burden off tedious calculations with weight functions so that number theorists can focus on more arithmetic aspects.

Next we discuss some of the existing results in the literature, and why they are unsatisfactory for some applications the authors have encountered in analytic number theory.
The stationary phase method appears in many standard textbooks, e.g. see \cite[Theorem 7.7.5]{Hormander}, \cite[Ch.~VIII Proposition 3]{Stein}, or \cite[Theorem 3.11]{Zworski}.
The method gives
an asymptotic expansion of a Fourier integral of the form $\intR e^{i \lambda \varphi(x)} a(x) dx$, where $a$ is smooth of compact support, under the assumption that $\varphi'(x_0) = 0$ for a unique point $x_0$ in the support of $a$.  
We do not wish to restrict attention to phase functions of the form $\lambda \varphi(x)$.  For instance, one may consider a phase of the form  $\lambda x - x^2$ or
$x - \lambda \log x$.  Although one may sometimes reduce to the case $\lambda \varphi(x)$ by some ad-hoc change of variables, e.g. $x \rightarrow \lambda x$ in the first example above, it is not desirable to  require a pre-processing step.

In order to arrive at the primary results of this paper stated in Section \ref{section:stationaryPhase}, we study the main term resulting from a stationary phase analysis. This main term is given in terms of differential operators applied to $a$, and evaluated at $x_0$ (which is the stationary point, defined implicitly in terms of these auxiliary variables).  These differential operators have coefficients depending on $\varphi$ (with negative powers of $\varphi''(x_0)$, see \cite[(3.4.11)]{Zworski}).  Most of the work in this paper consists of bounding the derivatives of this main term with respect to the remaining variables. 

The paper is organized as follows.  In Section \ref{section:InertFunctions} we define the class of functions of interest to us here, followed by some examples and easy properties.  In Section \ref{section:stationaryPhase} we state our Main Theorem
, which gives an asymptotic formula for $I$ under conditions ensuring the stationary phase method may be applied.  The crucial point here is that we establish derivative bounds on $I$, which are strong enough that one may often fruitfully and easily iterate the stationary phase method.  In Section \ref{section:example} we illustrate this process with an example which was chosen due to its application to some moment problems in the analytic theory of $L$-functions
\cite{CI},
\cite{KiralYoung5thMoment},
\cite{PetrowYoung}.  We give the proof of the Main Theorem in Section \ref{section:proof}.

\section{Inert functions}\label{section:InertFunctions}
  \subsection{Basic Definition}
 We begin with certain families of functions defined by derivative bounds.
Let $\cF$ be an index set and $X=X_T: \cF \to \R_{\geq 1}$ be a function of $T \in \cF$.
\begin{definition}\label{inert}
A family $\{w_T\}_{T\in \cF}$ of smooth  
functions supported on a product of dyadic intervals in $\R_{>0}^d$ is called $X$-inert if for each $j=(j_1,\ldots,j_d) \in \Z_{\geq 0}^d$ we have \begin{equation*}
C(j_1,\ldots,j_d) 
:= \sup_{T \in \cF} \sup_{(x_1, \ldots, x_d) \in \R_{>0}^d} X_T^{-j_1- \cdots -j_d}\left| x_{1}^{j_1} \cdots x_{d}^{j_d} w_T^{(j_1,\ldots,j_d)}(x_1,\ldots,x_d) \right| < \infty.
\end{equation*}
\end{definition}
The notion of $X$-inert measures the uniformity of the ``flatness'' of the functions $w_T$ as we move across the family $\cF$.

    We also remark that the assumption that $w_T$ has support on a product of dyadic intervals is often easily attained, by application of a dyadic partition of unity. We often abbreviate the sequence of constants $C(j_1,\ldots, j_d)$ associated to a family of inert functions by $C_\cF$.

{\bf Convention}. Throughout this paper, constants implied by $\ll$ and $O()$ symbols are uniform with respect to $\mathcal{F}$, and depend only on $C_{\mathcal{F}}$.
On the occasion that an implied constant depends on some additional auxiliary parameter, e.g. $\varepsilon >0$ or $A \geq 1$, we will 
place it as a subscript.
We also use the standard notation $e(x) = e^{2\pi i x}$.

    \subsection{Examples}
    We present several examples of how inert families may be constructed.
    \begin{example}[Dilation] Let $w(x_1, \ldots, x_d)$ be a fixed smooth function that is supported on $[1,2]^d$ and define
    \begin{equation}
        w_{X_1, \ldots, X_d}(x_1, \ldots x_d) = w\Big(\frac{x_1}{X_1} , \ldots, \frac{x_d}{X_d} \Big).
    \end{equation}
    Then with $\mathcal{F}=\{T = (X_1, \ldots, X_d) \in \R_{>0}^d\}$, the family $\{w_T\}_{T \in \cF}$ is $1$-inert.
    \end{example}
\begin{example}[Oscillation]\label{WTexample2} With $w$ as in the previous example we let
\begin{equation}
\label{eq:WTexample2}
 W_{T}(x_1, \dots, x_d) = e^{i \lambda_1 x_1 + \dots + i \lambda_d x_d} w\Big(\frac{x_1}{X_1} , \ldots, \frac{x_d}{X_d} \Big),
\end{equation}
 but now $\cF =\{T = (X_1, \dots, X_d, \lambda_1, \dots, \lambda_d)\}$.  It is easy to see $W_T$ is $X$-inert with $X=X_T = 1 + \max(|\lambda_1| X_1, \dots, |\lambda_d| X_d)$, 
but not $Y$-inert for any $Y=Y_T$ such that $Y_T/X_T \to 0$ as $X_T\to \infty$.
 \end{example}

\begin{example}[Products]
Let $\{w_T\}_{T \in \cF}$ and $\{v_{T'}\}_{T' \in \cF'}$ be $X$ and $Y$-inert families respectively.  Then $\{ w_T \cdot v_{T'}\}_{(T,T') \in \cF \times \cF'}$ is a $\max(X_T,Y_{T'})$-inert family. \end{example}  For instance, in the one-variable case, we have
\begin{equation*}
      |\max(X,Y)^{-j} x^j \frac{d^j}{dx^j} w_T(x) v_{T'}(x) | \leq \sum_{k=0}^{j} \binom{j}{k} C_w(k) C_v(j-k).
   \end{equation*}
   
\begin{example}[Specialization]
Suppose that $\{w_T(x_1, \dots, x_d)\}_{T\in \cF}$ is $X$-inert, supported on $x_i \asymp X_i$, and that we specialize $x_1 = X_1 f( \frac{x_2}{X_2}, \dots,  \frac{x_d}{X_d})$, say, where $f$ is a fixed smooth function.  Let $$W_T(x_2, \dots, x_d) = w_T(X_1 f( \frac{x_2}{X_2}, \dots,  \frac{x_d}{X_d}), x_2, \dots, x_d).$$ Then $\{W_T\}_{T\in \cF}$ is also $X$-inert. 
\end{example}   
One may deduce this quickly from \eqref{eq:Fchainruleformula} below.

  \subsection{Fourier transforms}
    \label{section:InertFourier}
    Under the Fourier transform inert functions behave regularly.  
Suppose that $w_T(x_1, \dots, x_d)$ is $X$-inert and supported on $x_i \asymp X_i$ 
    for each $i$.  Let
    \begin{equation*}
        \widehat{w_T}(t_1, x_2, \dots, x_d) = \intR w_T(x_1, \dots, x_d) e(-x_1 t_1) d x_1
    \end{equation*}
    denote its Fourier transform in the $x_1$-variable.   
    \begin{proposition}
\label{prop:InertFourier}
 Suppose that $\{ w_T: T \in \mathcal{F} \}$ is a family of $X$-inert 
    functions such that $x_1$ is supported on $x_1 \asymp X_1$, and $\{w_{\pm Y_1}: Y_1 \in  (0,\infty)\}$ is a $1$-inert family of functions with support on $\pm t_1 \asymp  Y_1$.
	Then the family $\{X_1^{-1} w_{\pm Y_1}(t_1) \widehat{w_T}(t_1, x_2, \dots, x_d): (T,  \pm Y_1) \in \mathcal{F} \times \pm (0, \infty) \} \}$ is $X$-inert. Furthermore if $Y_1 \gg q^\vep X/X_1$ then for any $A>0$, we have
\[
	X_1\inv w_{\pm Y_1}(t_1) \widehat{w_T}(t_1,x_2, \ldots, x_d) \ll_{\varepsilon, A} q^{-A} 
\]
\end{proposition}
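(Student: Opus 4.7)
My plan is to reduce both claims to a single key estimate on $\widehat{w_T}$ obtained by repeated integration by parts in $x_1$. Writing $e(-x_1 t_1) = (-2\pi i t_1)^{-1} \partial_{x_1} e(-x_1 t_1)$ and integrating by parts $N$ times in $x_1$ (with no boundary terms since $w_T$ is supported on $x_1 \asymp X_1$), the $X$-inertness bound $|\partial_{x_1}^N w_T| \ll (X/X_1)^N$ together with the fact that the $x_1$-support has length $\asymp X_1$ yields, for any $N \geq 0$,
\[
|\widehat{w_T}(t_1, x_2, \ldots, x_d)| \ll_N X_1 \left(\frac{X}{X_1 |t_1|}\right)^N.
\]
The decay claim is then immediate: on the support of $w_{\pm Y_1}$ we have $|t_1| \asymp Y_1 \gg q^\varepsilon X/X_1$, hence $X/(X_1|t_1|) \ll q^{-\varepsilon}$, so choosing $N = \lceil A/\varepsilon \rceil$ gives $X_1^{-1}|w_{\pm Y_1}(t_1) \widehat{w_T}| \ll_{\varepsilon, A} q^{-A}$.

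For the inertness claim, I fix a multi-index $(j_1, \ldots, j_d)$. The derivatives $\partial_{x_i}^{j_i}$ for $i \geq 2$ commute with the Fourier transform in $x_1$, so they act on $w_T$ directly and, combined with the $x_i^{j_i}$ prefactors, replace $w_T$ by the new $X$-inert weight $\prod_{i \geq 2} x_i^{j_i} \partial_{x_i}^{j_i} w_T$, of magnitude $\ll X^{j_2+\cdots+j_d}$ on the same $x_1$-support. For the $t_1$-derivatives, the Leibniz rule gives
\[
t_1^{j_1} \partial_{t_1}^{j_1}\bigl[w_{\pm Y_1}(t_1) \widehat{w_T}\bigr] = \sum_{k=0}^{j_1} \binom{j_1}{k} \bigl[t_1^k w_{\pm Y_1}^{(k)}(t_1)\bigr]\bigl[t_1^{j_1-k} \partial_{t_1}^{j_1-k} \widehat{w_T}\bigr].
\]
The first bracket is $O(1)$ by $1$-inertness of $w_{\pm Y_1}$, and $\partial_{t_1}^{j_1-k} \widehat{w_T} = \widehat{(-2\pi i x_1)^{j_1-k} w_T}$. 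Since the weight $(-2\pi i x_1)^{j_1-k} w_T$ is again $X$-inert on $x_1 \asymp X_1$ of size $\ll X_1^{j_1-k}$, applying the key estimate to it with $N = j_1-k$ gives $|t_1^{j_1-k} \partial_{t_1}^{j_1-k} \widehat{w_T}| \ll X_1 \cdot X^{j_1-k}$. After multiplying by the prefactor $X_1^{-1}$ and combining with the $X^{j_2+\cdots+j_d}$ absorbed earlier, each Leibniz term is $\ll X^{j_1+\cdots+j_d}$, which is the required inertness bound.

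The main obstacle is bookkeeping rather than analytic depth: tracking the various powers of $X$, $X_1$, and $|t_1|$ through the integration by parts and verifying that operations such as multiplying by $x_1^m$, differentiating in $x_i$ for $i \geq 2$, and multiplying by $t_1^k \partial_{t_1}^k w_{\pm Y_1}$ all preserve the inert structure with the correct scaling. These closure properties are essentially those already illustrated in the Products and Specialization examples, and the work ultimately amounts to choosing the correct value of $N$ in the key estimate in each case.
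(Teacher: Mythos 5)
Your proposal is correct and follows essentially the same path as the paper's proof: both rest on integration by parts in $x_1$ to control $\widehat{w_T}$ and its $t_1$-derivatives, and then multiply by the dyadic cutoff $w_{\pm Y_1}$ to restore compact support. The paper rescales $x_1 \to X_1 x_1$ and bounds $\partial_{t_1}^{j_1}\bigl[X_1^{-1}\widehat{w_T}\bigr]$ directly, whereas you first isolate the pointwise decay estimate $|\widehat{w_T}| \ll_N X_1 (X/(X_1|t_1|))^N$ and then deduce derivative bounds via $\partial_{t_1}^m \widehat{w_T} = \widehat{(-2\pi i x_1)^m w_T}$ and the Leibniz rule with $w_{\pm Y_1}$; these are equivalent formulations of the same computation, and both handle the $x_2,\dots,x_d$ derivatives by noting they commute with the Fourier transform in $x_1$.
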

\begin{proof}
    It is a standard fact in Fourier analysis that $\widehat{w_T}$ and its derivatives may be bounded in terms of $X, X_1, \dots, X_d$ by integration by parts.
    Integrating by parts $j_1$ times  gives
    \begin{equation*}
        \frac{\partial^{j_1} X_1^{-1} \widehat{w_T}(t_1, x_2, \dots, x_d) }{\partial t_1^{j_1}} 
        =    \intR \frac{\partial^{j_1}}{\partial x_1^{j_1}} \Big[ w_T(X_1 x_1, \dots, x_d) x_1^{j_1} \Big] e(-x_1 X_1 t_1) \frac{dx_1}{(-t_1)^{j_1}} \ll \frac{X^{j_1}}{|t_1|^{j_1}},
    \end{equation*}
    since the $j_1$-th derivative of the expression in square brackets is $\ll X^{j_1}$, and is supported on $x_1 \asymp 1$.  
    By a slight generalization of this to allow derivatives with respect to $x_2, \dots, x_d$, we see that 
    $X_1^{-1} \widehat{w_T}(t_1, x_2, \dots, x_d)$ satisfies the desired derivative bound that an $X$-inert function is required 
    to have. 
    
    The missing property is that it is not dyadically supported in $t_1$ (with $t_1 > 0$).
    However, we can get around this minor issue by defining
    $$W_{T, Y_1}(t_1, x_2, \dots x_d) = w_{Y_1}(t_1) \widehat{w_T}(t_1, x_2, \dots x_d),$$ where $\{w_{Y_1} : Y_1 > 0 \}$ is 
    a $1$-inert family, supported on $t_1 \asymp Y_1$ (or $-t_1 \asymp Y_1$).  For instance, $w_{Y_1}$ could be part of a dyadic partition of unity.
Now we can claim that $X_1^{-1} W_{T, Y_1}$ forms an $X$-inert family.
    Moreover, by a similar integration by parts argument, we have that
    \begin{equation*}
        W_{T,Y_1}(t_1, x_2, \dots, x_d) \ll_A X_1 \Big(1 + \frac{|t_1| X_1 }{X}\Big)^{-A} \asymp  \Big(1 + \frac{Y_1 X_1}{X}\Big)^{-A},
    \end{equation*}
    giving the final statement of the proposition. \end{proof}

\section{Stationary phase} \label{section:stationaryPhase}

\begin{mt}[Stationary phase]
 \label{thm:exponentialintegral}
 Suppose $w_T$ is $X$-inert in $t_1, \dots t_d$, supported on $t_1 \asymp Z$ and $t_i \asymp X_i$ for $i=2,\dots, d$.  Suppose that on the support of $w_T$, $\phi = \phi_T$ satisfies
\begin{equation}
\label{eq:phiderivatives}
 \frac{\partial^{a_1 + a_2 + \dots + a_d}}{\partial t_1^{a_1} \dots \partial t_d^{a_d}} \phi(t_1, t_2, \dots, t_d) \ll_{C_\mathcal{F}} \frac{Y}{Z^{a_1}} \frac{1}{X_2^{a_2} \dots X_d^{a_d}},
\end{equation}
for all $a_1, \ldots, a_d \in \N$.
Suppose $ \phi''(t_1, t_2, \dots, t_d) \gg \frac{Y}{Z^2}$, (here and below, $\phi'$  and $\phi''$ denote the derivative with respect to $t_1$) for all $t_1, t_2, \dots, t_d$ in the support of $w_T$, and there exists $t_0 \in \mr$ such that $\phi'(t_0) = 0$  (note $t_0$ is necessarily unique). 
Suppose that $Y/X^2 \geq R \geq 1$.  Then 
\begin{equation}
\label{eq:IasymptoticMainThm}
I = \int_{\mathbb{R}} e^{i \phi(t_1, \dots, t_d)} w_T(t_1, \dots, t_d) dt_1 =  \frac{Z}{\sqrt{Y}} e^{i \phi(t_0, t_2, \dots, t_d)} W_T(t_2, \dots, t_d)
 + O_{A}(ZR^{-A}),
\end{equation}
for some $X$-inert family of functions $W_T$, and where $A>0$ may be taken to be arbitrarily large.
The implied constant in \eqref{eq:IasymptoticMainThm} depends only on $A$ and $C_{\mathcal{F}}$.
\end{mt}
The fact that the family $W_T$ is inert in \eqref{eq:IasymptoticMainThm} shows a kind of closure property of the weight functions that appear in stationary phase.  It is therefore well-suited for iterated integrals where the conditions of stationary phase may be applied.

The Main Theorem builds on earlier work of Blomer-Khan-Young \cite{BKY} which obtained an asymptotic formula for $I$ in the one-variable case.  What is new in this paper is the careful analysis of the derivative bounds on the resulting weight function with respect to all the remaining variables $t_2, \dots, t_d$.

To continue this discussion, we synthesize
 Lemma 8.1 and Proposition 8.2 of \cite{BKY} using this language of inert functions, along with some simplified choices of parameters, with the following:
\begin{lemma}[\cite{BKY}]
\label{lemma:exponentialintegral}
 Suppose that $w = w_T(t)$ is a family of $X$-inert functions, 
 with compact support on $[Z, 2Z]$, so that
$w^{(j)}(t) \ll (Z/X)^{-j}$.  Also suppose that $\phi$ is smooth and satisfies $\phi^{(j)}(t) \ll \frac{Y}{Z^j}$ for some $Y/X^2 \geq R \geq 1$ and all $t$ in the support of $w$.  Let
\begin{equation*}
 I = \intR w(t) e^{i \phi(t)} dt.
\end{equation*}
\begin{enumerate}
 \item 
\label{statement1} 
 If $|\phi'(t)| \gg \frac{Y}{Z}$ for all $t$ in the support of $w$, then $I \ll_A Z R^{-A}$ for $A$ arbitrarily large.
 \item If $\phi''(t) \gg \frac{Y}{Z^2}$ for all $t$ in the support of $w$, and there exists $t_0 \in \mr$ such that $\phi'(t_0) = 0$ (note $t_0$ is necessarily unique), then 
 \begin{equation}
 \label{eq:Iformula}
  I = \frac{e^{i \phi(t_0)}}{\sqrt{\phi''(t_0)}}  F_T(t_0) + O_{A}(  Z R^{-A}),
 \end{equation}
where $F_T$ is a family of $X$-inert functions (depending on $A$)  supported on $t_0 \asymp Z$. 
\end{enumerate}
\end{lemma}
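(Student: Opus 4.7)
The plan is to prove the two parts in sequence, using part (1) as an input to part (2) for controlling the tail.

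For part (1), I would use the identity $e^{i\phi} = (i\phi')^{-1} \frac{d}{dt} e^{i\phi}$ and integrate by parts $N$ times, rewriting $I = \int L^N w \cdot e^{i\phi}\,dt$ where $L f = \frac{d}{dt}(f/(i\phi'))$, with no boundary terms thanks to compact support. The Leibniz rule, combined with $\phi^{(j)} \ll Y/Z^j$, $|\phi'| \gg Y/Z$, and $w^{(j)} \ll (X/Z)^j$, gives $L^N w \ll (X/Y)^N$ up to combinatorial constants. Because $Y \geq RX^2$ and $X \geq 1$ together force $X/Y \leq R^{-1}$, each iteration saves a factor of $R$; choosing $N = \lceil A \rceil$ and integrating over the support of length $\asymp Z$ yields $I \ll_A Z R^{-A}$.

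For part (2), I would first localize by a smooth cutoff $\chi$ supported on $|t - t_0| \leq \delta Z$ and equal to $1$ on $|t - t_0| \leq \delta Z/2$, with $\delta$ a small fixed power of $R^{-1}$. On the complement, $\phi''(t) \gg Y/Z^2$ integrates to $|\phi'(t)| \gtrsim \delta Y/Z$, while $w(1-\chi)$ is $(X/\delta)$-inert, so part (1) contributes $O_A(ZR^{-A})$ provided $\delta$ is chosen appropriately. On the localized piece, Morse's lemma produces a smooth diffeomorphism $u \mapsto t(u)$ with $t(0)=t_0$, normalized so that $\phi(t(u)) - \phi(t_0) = \tfrac12 \phi''(t_0) u^2$; after this change of variable the integral becomes $e^{i\phi(t_0)}\int a(u)\, e^{i\lambda u^2/2}\,du$, where $\lambda = \phi''(t_0)$ and $a(u) = w(t(u))\chi(t(u))\,|dt/du|$. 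The classical Fresnel expansion, obtained by repeated integration by parts against the Gaussian phase, then gives
$$\int a(u)\,e^{i\lambda u^2/2}\,du = \frac{e^{i\pi/4}\sqrt{2\pi}}{\sqrt{\lambda}} \sum_{j=0}^{K-1} \frac{(i/(2\lambda))^j}{j!}\, a^{(2j)}(0) + O_K\bigl(\lambda^{-K-1/2}\,\|a^{(2K)}\|_\infty\bigr),$$
and choosing $K$ sufficiently large in terms of $A$, together with the estimate $\lambda^{-1}\|a^{(2)}\|_\infty \ll X^2/Y \leq R^{-1}$ for the term-by-term gain, makes the remainder $O_A(ZR^{-A})$. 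The main term series is then packaged as $F_T(t_0)$, divided out by $\sqrt{\phi''(t_0)}$ to produce the claimed shape.

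The step I expect to be the main obstacle is verifying that $F_T$ is $X$-inert as a function of its argument (with scale $Z$). This requires (i) showing that the Morse diffeomorphism $t(u)$, defined implicitly by the quadratic normal form, depends smoothly on $T$ with derivative bounds inherited from those of $\phi$, via the implicit function theorem and Fa\`a di Bruno; (ii) expressing each $a^{(2j)}(0)$ as a polynomial in the Taylor coefficients of $w$ and $\phi$ at the stationary point, divided by powers of $\phi''(t_0)$; and (iii) treating the stationary point as a free variable $s \asymp Z$ rather than as the distinguished critical point, and checking that the resulting algebraic combination of $X$-inert data remains $X$-inert in $s$. The hypothesis $\phi''(t_0) \gg Y/Z^2$ is precisely what makes each successive term in the Fresnel series gain a factor of order $X^2/Y \leq R^{-1}$, which both justifies the asymptotic truncation and keeps the bookkeeping of the inert constants $C_\mathcal{F}$ under uniform control.
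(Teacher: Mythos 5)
Your plan for part (1) matches the paper's (integration by parts, with the $Y/X^2 \geq R$ hypothesis delivering a factor $R^{-1}$ per iteration). For part (2), however, you take a genuinely different route. You invoke the Morse diffeomorphism $t(u)$ normalized so that $\phi(t(u)) - \phi(t_0) = \tfrac12 \phi''(t_0)u^2$, reduce to an exact Gaussian phase, and then expand via Fresnel; the burden of the inertness verification is pushed onto the diffeomorphism, its Jacobian, and its dependence on the free parameter $s = t_0$, which you handle by Fa\`a di Bruno and the implicit function theorem. The paper instead follows the Blomer--Khan--Young treatment (which is in the spirit of Zworski's stationary-phase proof rather than H\"ormander's or Stein's): no change of variables at all. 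One writes $\phi(t) = \phi(t_0) + \tfrac12\phi''(t_0)(t-t_0)^2 + H(t)$, where $H$ collects the cubic and higher Taylor remainder, and absorbs $e^{iH}$ into the amplitude, obtaining $G_y(t) = w(t)\,e^{iH(t,y)}$. Then Fourier inversion/Plancherel against the pure Gaussian $e^{iA(t-t_0)^2}$ gives the expansion with $p_n(y) = c_n (\phi''(y))^{-n} G_y^{(2n)}(y)$. The payoff of the paper's choice is that $G_y$ and its derivatives at $t=y$ are manifestly polynomial expressions in $\phi^{(\nu)}(y)$ for $\nu \geq 3$ (since $H$ vanishes to third order at $t = y$) and derivatives of $w$; there is no diffeomorphism whose derivative bounds must be re-derived. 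This matters especially in the sequel (Lemma \ref{lemma:statphase}), where the same formula must be differentiated in $d-1$ auxiliary variables. Your Morse-lemma route is not wrong, but it introduces an extra layer of implicit-function bookkeeping that the paper deliberately sidesteps.

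One small inaccuracy in your write-up: the estimate ``$\lambda^{-1}\|a^{(2)}\|_\infty \ll X^2/Y$'' is not literally correct because $a$ contains the cutoff $\chi$, whose derivatives are of size $(\delta Z)^{-1}$, not $(X/Z)^{-1}$. This does not derail the argument: the main-term coefficients $a^{(2j)}(0)$ never see $\chi$ (since $\chi \equiv 1$ near $t_0$), and the sup-norm loss in the error term is $\delta^{-2K}$, which is compensated by choosing $\delta = R^{-\alpha}$ with $\alpha < 1/2$ fixed, exactly as you indicate in the tail estimate. But the reader should not come away thinking each Fresnel term saves $R^{-1}$ uniformly over the localized support; it is only at the stationary point that the clean $X^2/Y \leq R^{-1}$ gain holds.
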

In case it is useful in other contexts, we mention that statement \eqref{statement1} only requires $Y/X \geq R$.

The part of the conclusion that $F_T$ is a family of $X$-inert functions  is not explicitly stated that way, but is implicit in \cite[(8.11)]{BKY}.  Here we need to carefully mention that \cite[(8.11)]{BKY} gives bounds on the derivatives of $F_T(y)$ viewing $y$ as an independent variable.  In practice, $y = t_0$ depends on $\phi$ in some way, and so more information is required in order to realize $F_T$ as an inert function with respect to auxilliary variables.    

\section{An example}
\label{section:example}
Suppose $\lambda_1, \lambda_2, \lambda_3 \in \mathbb{R}$, and $w_T(x_1, x_2, x_3)$ is an $X$-inert family supported on $x_i \asymp X_i$ for all $i=1,2,3$.  
Suppose that the family $T \in \mathcal{F}$ may be parameterized by some real number $q \geq 1$, 
where $X \ll_{\varepsilon} q^{\varepsilon}$.  Consider
\begin{equation}
\label{eq:IExampleDef} 
I = \int_{\mr^3} w_T(x_1, x_2, x_3) e(-t x_1 x_2 x_3 + x_1 \lambda_1 + x_2 \lambda_2 + x_3 \lambda_3) dx_1 dx_2 dx_3.
\end{equation}
Suppose that $t >0$ is so that $P:=t X_1 X_2 X_3 \gg q^{\delta}$ for some fixed $\delta > 0$, since otherwise the phase arising from $tx_1 x_2 x_3$ is hardly oscillatory (in this case, $e(-t x_1 x_2 x_3) w_T(x_1, x_2, x_3)$ is $X$-inert with $X \ll_{\varepsilon} q^{\varepsilon}$, so that Proposition \ref{prop:InertFourier} applies).  Also, suppose that $X_i \ll q^{100}$, for each $i=1,2,3$.
To begin, we first locate the $\lambda_i$ into dyadic regions outside of which $I$ is very small.  

Since $\frac{\partial}{\partial x_i} \phi = \lambda_i - \frac{t x_1 x_2 x_3}{x_i}$ (here and throughout this example, our usage of $\phi$ differs by a factor of $2\pi$ from that in the previous section, but this has no significant effect), unless
\begin{equation}
\label{eq:lambdaSize}
\lambda_i \asymp \frac{P}{X_i},
\end{equation}
then $|\frac{\partial}{\partial x_i} \phi| \gg |\lambda_i| + \frac{P}{X_i}$.  In this scenario, we may apply Lemma \ref{lemma:exponentialintegral} part (1) with $Z = X_i$ and $Y = |\lambda_1| X_1 + P$. Since  $Y \gg q^{\delta} \gg X^2 q^{\varepsilon}$ by assumption, we conclude that $I \ll_{A} q^{-A}$ unless \eqref{eq:lambdaSize} holds.

Now suppose that \eqref{eq:lambdaSize} holds.   Viewing $x_2, x_3$ as fixed, change variables $x_1 \rightarrow x_1 \frac{X_2 X_3}{x_2 x_3}$, so that
\begin{equation}
I =  \int_{\mr^3} w_T(x_1, x_2, x_3) e(-t x_1 X_2 X_3 + x_1 \lambda_1 \frac{X_2 X_3}{x_2 x_3} + x_2 \lambda_2 + x_3 \lambda_3) dx_1 dx_2 dx_3,
\end{equation}
where $w_T$ denotes a new $X$-inert family of functions (we do not give a new name to the new family).  The reason to perform this change of variables is to de-linearize the phase so that stationary phase may be applied in either the $x_2$ or $x_3$ variables.  Let us focus on $x_3$ first, where the phase takes the form
$\phi = \lambda_3 x_3 + x_1 \lambda_1 \frac{X_2 X_3}{x_2 x_3}$, so
\begin{equation}
 \frac{\partial}{\partial x_3} \phi = \lambda_3 - x_1 \lambda_1 \frac{X_2 X_3}{x_2 x_3^2}\quad \text{and} \quad  \frac{\partial^2}{\partial x_3^2}\phi  = 2 \frac{x_1\lambda_1 X_2X_3}{x_2 x_3^3}.
\end{equation}
The conditions of the Main Theorem 
 hold with $Y = X_1 \lambda_1 \asymp P$ and $Z = X_3$, and with the stationary point $(x_3)_0 =(\frac{\lambda_1 x_1 X_2 X_3}{\lambda_3 x_2 })^{1/2}$.  For consistency, one may check from \eqref{eq:lambdaSize} that
\begin{equation}
(x_3)_0 \asymp \Big(\frac{\lambda _1 X_1  X_3}{\lambda_3 }\Big)^{1/2} \asymp X_3,
\end{equation}
so that indeed the magnitude of $(x_3)_0$ matches the support of $w_T$.

 We therefore obtain from the Main Theorem 
 that
\begin{equation}
I = \frac{X_3}{\sqrt{P}}
\int_{\mr^2}
e(-t x_1 X_2 X_3) e\Big(x_2 \lambda_2 + 2 \sqrt{\frac{\lambda_1 \lambda_3 x_1 X_2 X_3}{x_2}}\Big) w_T(x_1, x_2) dx_2 dx_1
+ O_{A}(q^{-A}).
\end{equation}
Again, $w_T$ represents a new $X$-inert family of functions.

Now we repeat the process for $x_2$.  The new phase to consider is $\phi = x_2 \lambda_2 + 2 \sqrt{\frac{\lambda_1 \lambda_3 x_1 X_2 X_3}{x_2}}$ which satisfies
\begin{equation}
 \frac{\partial}{\partial x_2} \phi = \lambda_2 - \sqrt{\frac{\lambda_1 \lambda_3 x_1 X_2 X_3}{x_2^3}} \quad \text{ and } \quad  \frac{\partial^2}{\partial x_2^2}\phi =  \frac32 \frac{\sqrt{\lambda_1 \lambda_3 x_1 X_2 X_3}}{x_2^{5/2}} .
\end{equation}
The 
conditions of the Main Theorem 
 hold, with $Z = X_2$ and
\begin{equation}
Y = \sqrt{\lambda_1 \lambda_3 X_1 X_3} \asymp P.
\end{equation}
The stationary point occurs as
\begin{equation}
(x_2)_0 = \Big(\frac{\lambda_1 \lambda_3 x_1 X_2 X_3}{\lambda_2^2}\Big)^{1/3}.
\end{equation}
Thus,
\begin{equation}
I = \frac{X_2 X_3}{P} \intR e(-t x_1 X_2 X_3 +3 (\lambda_1 \lambda_2 \lambda_3 x_1 X_2 X_3)^{1/3} ) w_T(x_1) dx_1 + O_{A}(q^{-A}).
\end{equation}

Finally, we perform stationary phase one final time, on $x_1$.  We have $\phi = -t x_1 X_2 X_3 +3 (\lambda_1 \lambda_2 \lambda_3 x_1 X_2 X_3)^{1/3}$ which satisfies the conditions of Theorem  \ref{thm:exponentialintegral} with $Z = X_1$, and $Y = P$.  The stationary point occurs at
\begin{equation}
(x_1)_0 = \Big(\frac{\lambda_1 \lambda_2 \lambda_3}{t^3 X_2^2 X_3^2} \Big)^{1/2}.
\end{equation}
Similarly to the previous two cases, we have
\begin{equation}
\label{eq:Iasymptotic}
I = \frac{X_1 X_2 X_3}{P^{3/2}} e\Big(2 \sqrt{\frac{\lambda_1 \lambda_2 \lambda_3}{t}}\Big) W_T(\cdot) + O_{A}(q^{-A}).
\end{equation}
Here the notation $W_T(\cdot)$ denotes an inert function after specializing the variables in terms of the ambient parameters $X_i, \lambda_i, t$.

It is an important feature of the above analysis that if the original inert function appearing in \eqref{eq:IExampleDef} is $X$-inert in terms of additional variables (e.g. the $\lambda_i$), then the resulting inert function in \eqref{eq:Iasymptotic} remains $X$-inert with respect to these variables.

An easy modification of the line of reasoning presented in this section can be used to analyze \eqref{eg1}.  Indeed, the astute reader may notice that the Conrey-Iwaniec integral \eqref{eg1} is of the same form as a threefold Fourier transform of \eqref{eq:Iasymptotic} with the substitution $t \leftrightarrow 1/t$.

\section{Proof of the Main Theorem
}\label{section:proof}
Our proof of the Main Theorem 
proceeds gradually.  As a first step, we will show
\begin{lemma}
\label{lemma:statphase}
Suppose $w_T$ is $X$-inert in $t_1, \dots t_d$, 
supported on $t_1 \asymp Z$ and $t_i \asymp X_i$ for $i=2,\dots, d$, 
and $\phi$ satisfies 
\begin{equation}
\label{eq:phiderivatives2}
 \frac{\partial^{a_1 + a_2 + \dots + a_d}}{\partial t_1^{a_1} \dots \partial t_d^{a_d}} \phi(t_1, t_2, \dots, t_d) \ll_{C_{\mathcal{F}}} \frac{Y}{Z^{a_1}} \frac{X^{a_2 + \dots + a_d}}{X_2^{a_2} \dots X_d^{a_d}}
\end{equation}
on the support of $w_T$.
Assume
the conditions in Lemma \ref{lemma:exponentialintegral} part (2) hold for $t=t_1$ (uniformly in $t_2, \dots, t_d$), and that $t_0 $ satisfies 
\begin{equation}
\label{eq:t0derivativeproperty}
 \frac{\partial^{b_2 + \dots + b_d} }{\partial t_2^{b_2} \dots \partial t_d^{b_d} } t_0 \ll_{C_{\mathcal{F}}} \frac{t_0}{X_2^{b_2} \dots X_d^{b_d}},
\end{equation}
for $t_0 \asymp Z$ (that is, $\frac{1}{Z} t_0$ is $1$-inert).
  Then
  \begin{equation}
   I = \frac{e^{i \phi(t_0,t_2, \ldots, t_d)}}{\sqrt{\phi''(t_0, t_2, \dots, t_d)}} F_T(t_0, t_2, \dots, t_d) + O_{A}(Z R^{-A}),
  \end{equation}
where
 $F_T = F_T(t_0, t_2, \dots, t_d)$ is $X$-inert in $t_2, \dots, t_d$.
\end{lemma}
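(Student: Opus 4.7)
The plan is to apply Lemma \ref{lemma:exponentialintegral} part (2) pointwise in the auxiliary variables $(t_2, \ldots, t_d)$ and then to promote the resulting one-variable inertness to joint inertness by inspecting how the \cite{BKY} construction depends on $(t_2, \ldots, t_d)$. For each fixed tuple $(t_2, \ldots, t_d)$, the hypothesis \eqref{eq:phiderivatives2} with $a_2 = \cdots = a_d = 0$ reduces to $\phi^{(j)}(t_1) \ll Y/Z^j$, so together with the assumed lower bound on $\phi''$ and the existence of $t_0$, Lemma \ref{lemma:exponentialintegral} furnishes
\[
I = \frac{e^{i \phi(t_0, t_2, \ldots, t_d)}}{\sqrt{\phi''(t_0, t_2, \ldots, t_d)}}\, F_T(t_0;\, t_2, \ldots, t_d) + O_A(Z R^{-A}),
\]
where $F_T(\,\cdot\,;\, t_2, \ldots, t_d)$ is an $X$-inert function of its first argument, with derivative bounds uniform in $(t_2, \ldots, t_d)$ since all of the hypotheses are uniform there.

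The substantive step is to upgrade this to joint control. Here I would revisit the construction of $F_T$ in \cite[Section 8]{BKY}: after truncating the stationary phase expansion at a depth that can be absorbed into the $O_A(Z R^{-A})$ error, $F_T(y;\, t_2, \ldots, t_d)$ is a finite sum of terms of the shape $c_k\, \phi''(y, t_2, \ldots, t_d)^{-k - 1/2}\, P_k(y, t_2, \ldots, t_d)$, where each $P_k$ is a polynomial expression in higher $t_1$-derivatives of $w_T$ and $\phi$ evaluated at the first argument. Differentiating in $(t_2, \ldots, t_d)$ and expanding by Leibniz, each derivative of a $\phi$-factor costs at most $X/X_i$ per $t_i$-derivative by \eqref{eq:phiderivatives2}, while each derivative of a $w_T$-factor costs at most $X/X_i$ by its assumed inertness. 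For the negative power $\phi''^{-k-1/2}$ one applies the chain rule, producing sums of products of $(t_2, \ldots, t_d)$-derivatives of $\phi''$ divided by higher powers of $\phi''$; combined with the uniform lower bound $\phi'' \gg Y/Z^2$ and the estimate \eqref{eq:phiderivatives2}, the same $X/X_i$ scaling per derivative emerges. Together, these bounds show $F_T(y;\, t_2, \ldots, t_d)$ is jointly $X$-inert as a function of $(y, t_2, \ldots, t_d)$.

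To conclude, I substitute $y = t_0(t_2, \ldots, t_d)$. The hypothesis \eqref{eq:t0derivativeproperty} is exactly the statement that $Z^{-1} t_0$ is $1$-inert in $(t_2, \ldots, t_d)$, so by the Specialization example of Section \ref{section:InertFunctions}, the composition $F_T(t_0, t_2, \ldots, t_d)$ is $X$-inert in $(t_2, \ldots, t_d)$, as required. The main obstacle is the second step: although the bookkeeping is mechanical, one must carefully track how the additional $X^{a_2 + \cdots + a_d}$ factor in \eqref{eq:phiderivatives2} propagates through both the polynomial numerators $P_k$ and the negative power of $\phi''$, as this is the only place in the argument where the joint derivative hypothesis on $\phi$ is genuinely exploited beyond what the one-dimensional Lemma \ref{lemma:exponentialintegral} already sees.
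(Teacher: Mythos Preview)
Your approach matches the paper's: apply the one-variable BKY expansion pointwise, show each term $p_n = c_n\,(\phi'')^{-n}\, G_y^{(2n)}(t)\big|_{t=y}$ is jointly $X$-inert in $(y, t_2, \ldots, t_d)$ by Leibniz bookkeeping with \eqref{eq:phiderivatives2} and the lower bound on $\phi''$, then substitute $y = t_0$ via the chain rule using \eqref{eq:t0derivativeproperty}. Two small slips to fix: the exponent on $\phi''$ in the BKY terms is $-n$ rather than $-n-\tfrac12$ (the factor $\phi''^{-1/2}$ sits outside $F_T$ in the lemma's conclusion), and the Specialization example as stated assumes a \emph{fixed} $f$, so for the $T$-dependent $t_0$ you should invoke the explicit chain-rule identity \eqref{eq:Fchainruleformula} directly, as the paper does.
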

Lemma \ref{lemma:statphase} differs from the Main Theorem 
in a few ways.  The assumption \eqref{eq:phiderivatives2} is slightly weaker than \eqref{eq:phiderivatives} because of the presence of $X$ on the right hand side of \eqref{eq:phiderivatives2}.  Moreover, Lemma \ref{lemma:statphase} contains an additional assumption \eqref{eq:t0derivativeproperty} on the behavior of the function $t_0$ implicitly defined by $\phi'(t_0, t_2, \dots, t_d) = 0$.
Finally, the main term in \eqref{eq:IasymptoticMainThm} is simplified in that $(\phi''(t_0, t_2, \dots, t_d))^{-1/2}$ is, in essence, replaced by $ ZY^{-1/2}$.  Before turning to the proof of Lemma \ref{lemma:statphase}, we state some additional lemmas that bridge the gap from Lemma \ref{lemma:statphase} to the Main Theorem. 

A simple yet common situation occurs when $t_0$ is a generalized monomial in the other variables, meaning
\begin{equation}
\label{eq:t0monomialformula}
 t_0 = c  t_2^{\alpha_2} \dots t_d^{\alpha_d},
\end{equation}
where the $\alpha_i$ are fixed real numbers and $c$ is some constant (depending on $T$).  It is easy to see that if $t_0$ 
is of the form 
\eqref{eq:t0monomialformula}, then it satisfies \eqref{eq:t0derivativeproperty}.
The following result shows that $t_0$ satisfies \eqref{eq:t0derivativeproperty} in much greater generality.
\begin{lemma}
\label{lemma:t0isXinert}  
 Let conditions be as in 
 Lemma \ref{lemma:statphase}.
 Then  $\frac{1}{Z}t_0$  
 (with $t_0$ defined implicitly by $\phi'(t_0, t_2, \dots, t_d) = 0$)
 is $X$-inert.  In particular, if $\phi$ satisfies \eqref{eq:phiderivatives2} with $X=1$ (as in the Main Theorem) 
 the assumption \eqref{eq:t0derivativeproperty} may be omitted from the statement of Lemma \ref{lemma:statphase}.
\end{lemma}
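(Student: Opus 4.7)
The approach is implicit differentiation combined with an induction on the order of the multi-index, using the multivariate Faà di Bruno formula to organize the combinatorics. I first rescale to flatten out the parameters: set $u = t_1/Z$ and $s_j = t_j/X_j$ for $j = 2, \dots, d$, and define
\begin{equation*}
\tilde{\phi}(u, s_2, \dots, s_d) = Y^{-1} \phi(Zu, X_2 s_2, \dots, X_d s_d).
\end{equation*}
The hypothesis \eqref{eq:phiderivatives2} becomes $|\partial_u^{a_1} \partial_{s_2}^{a_2} \cdots \partial_{s_d}^{a_d} \tilde{\phi}| \ll_{C_{\mathcal{F}}} X^{a_2 + \cdots + a_d}$ on $u \asymp 1$, $s_j \asymp 1$, and the nondegeneracy becomes $\tilde{\phi}_{uu}(u, s) \gg 1$. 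The implicitly defined critical point is $u_0(s) = t_0/Z \asymp 1$, solving $\tilde{\phi}_u(u_0(s), s) = 0$. Translating the conclusion through the rescaling, what must be shown is that for every multi-index $\vec{b} = (b_2, \dots, b_d)$,
\begin{equation*}
|\partial^{\vec{b}} u_0(s)| \ll_{\vec{b}, C_{\mathcal{F}}} X^{|\vec{b}|}, \qquad |\vec{b}| = b_2 + \cdots + b_d.
\end{equation*}

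\textbf{Induction.} The base case $|\vec{b}| = 0$ is just $u_0 \asymp 1$. For the inductive step, apply $\partial^{\vec{b}}$ to the identity $\tilde{\phi}_u(u_0(s), s) = 0$ using the multivariate Faà di Bruno formula. This yields
\begin{equation*}
0 = \tilde{\phi}_{uu}(u_0, s) \, \partial^{\vec{b}} u_0(s) + R_{\vec{b}}(s),
\end{equation*}
where $R_{\vec{b}}$ is a finite sum, with combinatorial coefficients, of terms of the form
\begin{equation*}
\bigl( \partial_u^{k+1} \partial^{\vec{\alpha}_0} \tilde{\phi} \bigr)(u_0(s), s) \, \prod_{i=1}^{k} \partial^{\vec{\alpha}_i} u_0(s),
\end{equation*}
indexed by decompositions $\vec{\alpha}_0 + \vec{\alpha}_1 + \cdots + \vec{\alpha}_k = \vec{b}$ with each $|\vec{\alpha}_i| \geq 1$ for $i \geq 1$, excluding the single ``leading'' decomposition $k=1$, $\vec{\alpha}_0 = \vec{0}$, $\vec{\alpha}_1 = \vec{b}$, which was pulled out on the left. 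In every remaining term, at least two of the $\vec{\alpha}_i$ are nonzero (or $\vec{\alpha}_0 \neq \vec{0}$), so each $u_0$-derivative that appears satisfies $|\vec{\alpha}_i| < |\vec{b}|$ strictly and is controlled by the inductive hypothesis by $\ll X^{|\vec{\alpha}_i|}$. The $\tilde{\phi}$-factor is $\ll X^{|\vec{\alpha}_0|}$ from the rescaled derivative bound. Multiplying gives $R_{\vec{b}}(s) \ll X^{|\vec{b}|}$, and dividing by $\tilde{\phi}_{uu}(u_0, s) \gg 1$ yields $\partial^{\vec{b}} u_0 \ll X^{|\vec{b}|}$, completing the induction.

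\textbf{Conclusion and ``in particular'' clause.} Unrolling the rescaling, $|\partial^{\vec{b}} t_0| \ll Z X^{|\vec{b}|} / (X_2^{b_2} \cdots X_d^{b_d})$ on the region $t_j \asymp X_j$, which is exactly the statement that $t_0/Z$ is $X$-inert. If the phase $\phi$ in \eqref{eq:phiderivatives2} satisfies the bounds with $X = 1$, then this conclusion says $t_0/Z$ is $1$-inert, which is precisely \eqref{eq:t0derivativeproperty}, so that hypothesis becomes automatic in Lemma \ref{lemma:statphase}.

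\textbf{Main obstacle.} The genuine work is in the Faà di Bruno bookkeeping: making sure that the leading $\tilde{\phi}_{uu}\,\partial^{\vec{b}} u_0$ term is correctly isolated and that every other term in $R_{\vec{b}}$ involves only strictly lower-order derivatives of $u_0$. The weighting by powers of $X$ (rather than by $1$) in the hypothesis, which is what allows the number of $s$-derivatives to be tracked cleanly through the rescaling, is precisely what makes the product of upper bounds in each Faà di Bruno term collapse to $X^{|\vec{b}|}$, so that the induction closes.
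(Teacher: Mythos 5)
Your proof is correct, and the core idea (implicit differentiation, a Fa\`a di Bruno--type expansion, induction on the order of the multi-index) matches the paper's, but the execution is organized differently in a way that is genuinely cleaner. The paper works in the original variables, starts the induction from the first-order quotient formula $t_0^{({\bf e}_{j_0})} = -f^{({\bf e}_{j_0})}/f^{({\bf e}_1)}$, and then differentiates that quotient repeatedly; this requires a separate combinatorial identity for derivatives of $1/h$ (the paper's \eqref{eq:MultivariableFinverseDerivatives}) in order to propagate the lower bound $\phi'' \gg Y/Z^2$ through every term. You instead normalize first, setting $u=t_1/Z$, $s_j = t_j/X_j$, $\tilde\phi = Y^{-1}\phi(Zu, X_2 s_2, \dots)$, so that all the $Y, Z, X_j$ bookkeeping disappears and only powers of $X$ remain, and then you differentiate the defining identity $\tilde\phi_u(u_0(s),s)=0$ directly. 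The key structural observation --- that the Fa\`a di Bruno expansion has a unique leading term $\tilde\phi_{uu}\,\partial^{\vec b}u_0$, with every other term involving only $\partial^{\vec\alpha_i} u_0$ of strictly lower order --- is right, since $\vec\alpha_1 = \vec b$ forces $\vec\alpha_0 = 0$ and $k=1$ by the constraint $\vec\alpha_0 + \cdots + \vec\alpha_k = \vec b$ with $|\vec\alpha_i|\geq 1$ for $i\geq 1$. Isolating that term and dividing by $\tilde\phi_{uu} \gg 1$ at the end of each inductive step then replaces the paper's reciprocal-derivative machinery with a single division, which is simpler to justify and simpler to read. The ``in particular'' clause is handled correctly: your argument never invokes \eqref{eq:t0derivativeproperty}, so in the $X=1$ case the conclusion ($\tfrac1Z t_0$ is $1$-inert) reproduces that hypothesis for free.
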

The reader may wonder, then, why we have retained the assumption \eqref{eq:t0derivativeproperty} in Lemma \ref{lemma:statphase}.  One reason is that in many important cases, it is easy to verify \eqref{eq:t0derivativeproperty} directly (e.g. when \eqref{eq:t0monomialformula} holds).  Another reason is that our proof of Lemma \ref{lemma:t0isXinert} builds naturally on the proof of Lemma \ref{lemma:statphase}.

Once one knows that $\frac{1}{Z} t_0$ is $X$-inert, it is then not too difficult to absorb $\phi''(t_0, t_2, \dots, t_d)^{-1/2}$ into the weight function.  For this, we have
\begin{lemma}
\label{lemma:Absorbingphi''t0IntoInertFunction}
Let conditions be as in the Main Theorem
.  Then
\begin{equation}
\frac{F_T(t_0, t_2, \dots, t_d)}{\sqrt{\phi''(t_0, t_2, \dots, t_d)}} = \frac{Z}{\sqrt{Y}} W_T(t_2, \dots, t_d),
\end{equation}
for some family of $X$-inert functions $W_T$.
\end{lemma}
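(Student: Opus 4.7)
The plan is to define
\begin{equation*}
W_T(t_2, \ldots, t_d) := F_T(t_0, t_2, \ldots, t_d) \cdot \psi(t_2, \ldots, t_d)^{-1/2}, \qquad \psi := \frac{Z^2}{Y}\phi''(t_0, t_2, \ldots, t_d),
\end{equation*}
(with $t_0 = t_0(t_2, \ldots, t_d)$ always specialized to the stationary point) so that $F_T/\sqrt{\phi''} = (Z/\sqrt{Y})\,W_T$ as required. I will then show each factor on the right is $X$-inert in $t_2, \ldots, t_d$; the Products example of Section \ref{section:InertFunctions} then closes the argument.

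First I would observe that the hypotheses of the Main Theorem imply those of Lemma \ref{lemma:statphase} and Lemma \ref{lemma:t0isXinert}, since \eqref{eq:phiderivatives} is the case $X = 1$ of \eqref{eq:phiderivatives2}. Lemma \ref{lemma:t0isXinert} therefore yields that $Z^{-1}t_0$, viewed as a function of $t_2, \ldots, t_d$ via the implicit relation $\phi'(t_0, t_2, \ldots, t_d) = 0$, is $X$-inert, and Lemma \ref{lemma:statphase} supplies the first factor $F_T(t_0, t_2, \ldots, t_d)$ as an $X$-inert function of $t_2, \ldots, t_d$, with dyadic support $t_i \asymp X_i$ inherited from $w_T$.

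For the second factor, the lower bound $\psi \gg 1$ is exactly the nondegeneracy hypothesis $\phi''(t_0, \ldots) \gg YZ^{-2}$, while $\psi \ll 1$ is \eqref{eq:phiderivatives} with $(a_1, a_2, \ldots, a_d) = (2,0,\ldots,0)$. To control the derivatives of $\psi$, I would apply the chain rule iteratively: a single $\partial_{t_i}$ applied to $\phi^{(k)}(t_0, \ldots)$ produces $\phi^{(k+1)}(t_0, \ldots)\,\partial_{t_i}t_0 + \partial_{t_i}\phi^{(k)}(t_0, \ldots)$, and \eqref{eq:phiderivatives} combined with the inert bound $\partial_{t_i}t_0 \ll ZX/X_i$ makes each summand of size $YZ^{-2}\cdot X/X_i$. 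Higher mixed derivatives of $\psi$ expand via Fa\`a di Bruno into sums of products of mixed partials of $\phi$ at $t_0$ times partials of $t_0$ in $t_2, \ldots, t_d$, and each summand inherits the expected scaling $YZ^{-2}\cdot X^{b_2+\ldots+b_d}/(X_2^{b_2}\cdots X_d^{b_d})$ by the same accounting. Hence $\psi$ is $X$-inert, and a second application of Fa\`a di Bruno to the smooth map $s \mapsto s^{-1/2}$ on an interval bounded away from $0$ shows $\psi^{-1/2}$ is also $X$-inert.

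The Products example now assembles the two $X$-inert factors into the $X$-inert family $W_T$, completing the proof. The main obstacle is the bookkeeping in the chain-rule expansion for $\psi$: one must verify that every term produced by Fa\`a di Bruno has the expected scaling $X^{b_2+\ldots+b_d}/(X_2^{b_2}\cdots X_d^{b_d})$ after dividing by the prefactor $YZ^{-2}$. Given \eqref{eq:phiderivatives} and the inertness of $Z^{-1}t_0$ from Lemma \ref{lemma:t0isXinert}, this is a routine but notationally dense verification.
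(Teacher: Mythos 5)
Your proposal is correct and takes essentially the same route as the paper: both rescale the key quantity to $g := \tfrac{Z^2}{Y}\phi''(t_0, t_2, \dots, t_d)$, observe that $g \asymp 1$ by the nondegeneracy hypothesis, verify $g$ is $X$-inert by combining \eqref{eq:phiderivatives} (in the spirit of \eqref{eq:phi''bounds}) with the inertness of $Z^{-1}t_0$ from Lemma~\ref{lemma:t0isXinert} via the chain-rule identity \eqref{eq:Fchainruleformula}, apply a Fa\`a di Bruno argument with the fixed smooth map $u \mapsto u^{-1/2}$ (on an interval bounded away from $0$) to conclude $g^{-1/2}$ is $X$-inert, and finally multiply by the $X$-inert factor $F_T(t_0, t_2, \dots, t_d)$ from Lemma~\ref{lemma:statphase}. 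The only difference is one of exposition: the paper compresses the verification that $g$ is $X$-inert into a reference to ``the previous lemmas,'' while you spell out the chain-rule accounting explicitly; the content is the same.
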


Taken together, Lemmas \ref{lemma:statphase}, \ref{lemma:t0isXinert}, and \ref{lemma:Absorbingphi''t0IntoInertFunction} then finish the proof of the Main Theorem
.

\begin{proof}[Proof of Lemma \ref{lemma:statphase}]
The assumptions in place mean that if 
we consider $t_2, \dots, t_d$ as temporarily fixed, then $I$ meets the conditions of Lemma \ref{lemma:exponentialintegral}, part (2), and so \eqref{eq:Iformula} holds.  The bound \cite[(8.11)]{BKY} gives that
\begin{equation*}
 \frac{d^j}{dy^j} F_T(y) \ll (X/Z)^j \asymp (X/y)^j.
\end{equation*}
However, this estimate views $t_2, \dots, t_d$ as fixed, and does not give bounds on the derivatives of $F$ with respect to $t_i$ with $2 \leq i \leq d$.  To go further, we need to extract the origin of $F = F_T$ from \cite{BKY}.
We have
\begin{equation*}
 F(y) = \sum_{n} p_n(y), \qquad p_n(y) = c_n (\phi''(y))^{-n} G_y^{(2n)}(t)\Big\vert_{t=y},
\end{equation*}
where the sum over $n$ is finite (depending only on the desired value of $A$ in \eqref{eq:Iformula}), $c_n$ are certain absolute constants, and
\begin{equation*}
 G_y(t) = G_y(t;t_2, \dots, t_d)= w_T(t, t_2, \dots, t_d) e^{i H(t, y, t_2, \dots, t_d)},
\end{equation*}
where (with $\phi''$ representing the second derivative in the first variable)
\begin{equation*}
 H(t,y, t_2, \dots, t_d) 
 = \phi(t, t_2, \dots, t_d) - \phi(y, t_2, \dots, t_d) - \tfrac12 \phi''(y, t_2, \dots, t_d) (t-y)^2.
\end{equation*}
{\bf Remarks.} 
Within the definition of $p_n$ (and hence $F$), the symbol $y$ is an arbitrary real number in the support of $w_T$.  Within \eqref{eq:Iformula}, we then substitute $y= t_0$, where now $t_0$ is an implicit function of the other variables.  Moreover, this expansion may seen to be equivalent to \cite[(3.4.11)]{Zworski}.

It may aid the reader to summarize the steps of \cite{BKY} leading to the above expression for $F$.  Firstly, Part (1) of Lemma \ref{lemma:exponentialintegral} follows from repeated integration by parts.  It turns out that under the assumptions of Part (2) of Lemma \ref{lemma:exponentialintegral}, one can well-approximate $I$ by a shorter integral around $t_0$ of length $\asymp \frac{Z}{\sqrt{Y}} R^{\varepsilon}$.  The assumed lower bound on $\phi''$ leads to a lower bound on $|\phi'|$, by the mean value theorem.  One then uses the integration by parts bound on the complement of this small neighborhood around $t_0$ to show this part of the integral is $O(R^{-A})$.  Now, to develop the main term, one approximates $\phi(t)$ by $\phi(t_0) + \frac12 \phi''(t_0) (t-t_0)^2 + \cdots$, where the dots represents the cubic and higher terms which in turn are pulled in to the smooth weight function.  Finally, one uses the Fourier inversion formula on an integral of the form $\intR e^{i A (t-t_0)^2} g(t) dt$ where $g$ has controlled derivatives.

From this point on, the proof of Lemma \ref{lemma:statphase} is self-contained.
We write $p_n$ more explicitly as a function of $y, t_2, \dots, t_d$ as
\begin{equation}
\label{eq:Ft0formulaStatPhase}
 p_n(y, t_2, \dots, t_d) =  c_n \Big(\frac{1}{\phi''(y, t_2, \dots, t_d)}\Big)^n \frac{\partial^{2n}}{\partial t^{2n}} 
 G_y(t;t_2, \dots, t_d) \Big\vert_{t=y}.
\end{equation}
We see that $G_y^{(2n)}(t) \vert_{t=y}$ is a sum of scalar multiples of terms of the form
\begin{equation*}
 w_T^{(\nu_0)}(y, t_2, \dots, t_d) H^{(\nu_1)}(y,y, t_2, \dots, t_d) \cdots H^{(\nu_{\ell})}(y, y,t_2, \dots, t_d),
\end{equation*}
where the superscripts refer  to partial differentiation in the first variable, where $\nu_0\geq 0$, $\nu_1,\ldots, \nu_\ell \geq 1$,  and where $\nu_0 + \nu_1+ \dots + \nu_{\ell} = 2n$.  Note that $H^{(\nu)}(y,y,t_2, \dots, t_d) = \phi^{(\nu)}(y, t_2,\dots, t_d)$ for $\nu \geq 3$, and vanishes otherwise.  We therefore deduce that
\begin{equation}
\label{eq:GyDerivativeBound}
G_y^{(2n)}(t) \Big\vert_{t=y} \ll \max_{\nu_0 + \nu_1+ \dots + \nu_{\ell} = 2n} \Big(\frac{X}{Z}\Big)^{\nu_0} \frac{Y^{\ell}}{Z^{\nu_1 + \dots + \nu_{\ell}}} \ll \frac{X^{2n} + Y^{2n/3}}{Z^{2n}},
\end{equation}
with the final inequality
seen as follows.  Since we may assume $\nu_i \geq 3$ for $i \geq 1$, we have $3\ell \leq  \nu_1 + \dots + \nu_{\ell} = 2n-\nu_0$, whence $X^{\nu_0} Y^{\ell} \leq (X/Y^{1/3})^{\nu_0} Y^{2n/3}$, which is acceptable for $X \leq Y^{1/3}$.  On the other hand, if $X \geq Y^{1/3}$, then we use $Y^{\ell} X^{\nu_0} \leq X^{3\ell + \nu_0} \leq X^{2n}$ to obtain the desired bound.  

Let $J_n(y, t_2, \dots, t_d) = G_y^{(2n)}(t, t_2, \dots, t_d) \vert_{t=y}$.
A slight generalization of \eqref{eq:GyDerivativeBound} shows 
\begin{equation*}
 J_n^{(a_1, a_2, \dots, a_d)}(y, t_2, \dots, t_d) \ll
 \frac{X^{2n} + Y^{2n/3}}{Z^{2n}}
 \frac{X^{a_1 + a_2 + \dots + a_d}}{Z^{a_1} X_2^{a_2} \dots X_d^{a_d}}.
\end{equation*}
The meaning of the superscripts on $J_n$ now mean differentiation with respect to the different variables, viewing $y$ as independent from $t_2, \dots, t_d$.

Next we examine $\Phi_n(y, \dots, t_d) := (\phi''(y, t_2, \dots, t_d))^{-n}$.  We claim
\begin{equation}
\label{eq:PhinDerivativeBounds}
 \Phi_n^{(a_1, a_2, \dots, a_d)}(y, t_2, \dots, t_d) \ll 
 (Z^2/Y)^n \frac{1}{Z^{a_1}}
 \frac{X^{ a_2 + \dots + a_d}}{X_2^{a_2} \dots X_d^{a_d}}.
\end{equation}
For this, we first note that an easy induction argument gives
\begin{equation*}
\frac{d^a}{dx^a} \frac{1}{f(x)} = 
\sum_{\substack{j_1 + \dots +  j_a = a}} c_{j_1, \dots, j_a} 
\frac{(f^{(j_1)}(x)) \dots (f^{(j_a)}(x))}{f(x)^{a + 1}}
,
\end{equation*}
for certain  constants $c_{j_1, \dots, j_a}$.  
Next we generalize to multiple variables.  Let ${\bf j}_i$ be a $d$-tuple of nonnegative integers, and let ${\bf a} = (a_1, \dots, a_d)$. 
Then
\begin{equation}
\label{eq:MultivariableFinverseDerivatives}
\frac{\partial^{a_1 + \dots + a_d}}{\partial y^{a_1} \dots \partial t_d^{a_d} } \frac{1}{f(y, t_2, \dots, t_d)} = 
\sum_{{\bf j}_1 +  {\bf j}_2 + \dots + {\bf j}_{{\bf a} \cdot {\bf 1}} = \bf{a}}
c_{{\bf j}_1, \dots {\bf j}_{{\bf a} \cdot {\bf 1}}}
\frac{f^{({\bf j}_1)} \dots f^{({\bf j}_{{\bf a} \cdot {\bf 1} })}}{f^{{\bf a} \cdot {\bf 1} +1}},
\end{equation}
where ${\bf 1}$ is the $1$-vector of length $d$ (so ${\bf a} \cdot {\bf 1} = a_1 + \dots + a_d$), and $c_{{\bf j}_1, \dots, {\bf j}_{{\bf a} \cdot {\bf 1}}}$ are absolute constants.
This may be easily verified by induction.

One may show directly from \eqref{eq:phiderivatives2} that 
\begin{equation}
\label{eq:phi''bounds}
\frac{\partial^{b_1 + \dots + b_d}}{\partial y^{b_1} \dots \partial t_d^{b_d}}
(\phi''(y, t_2, \dots, t_d))^n \ll \Big(\frac{Y}{Z^2}\Big)^n \frac{1}{Z^{b_1}}  \frac{X^{b_2 + \dots + b_d}}{X_2^{b_2} \dots X_d^{b_d}}.
\end{equation}
Combining \eqref{eq:MultivariableFinverseDerivatives} with \eqref{eq:phi''bounds} and that $\phi''(t) \gg Y/Z^2$, we derive that
\begin{equation*}
 \Phi_n^{(a_1,\ldots, a_d)}(y, t_2, \dots, t_d) 
 \\
 \ll 
\sum_{{\bf j}_1 +  {\bf j}_2 + \dots + {\bf j}_{{\bf a} \cdot {\bf 1}} = \bf{a}}
\frac{
\Big(\frac{Y}{Z^2}\Big)^{n({\bf a} \cdot {\bf 1})} 
\prod_{k=2}^{d} \Big(\frac{X}{X_k}\Big)^{{\bf j_1} \cdot {\bf e}_k + \dots + {\bf j}_{{\bf a} \cdot {\bf 1}} \cdot {\bf e}_k}
}
{Z^{{\bf j}_1 \cdot {\bf e}_1 + \dots + {\bf j}_{{\bf a} \cdot {\bf 1}}  \cdot {\bf e}_1} 
\Big(\frac{Y}{Z^2}\Big)^{n({\bf a} \cdot {\bf 1}+1)} },
\end{equation*}
where ${\bf e}_k$ is the $k$th standard basis vector.
This simplifies to give
the claimed \eqref{eq:PhinDerivativeBounds}.

Putting the above bounds together, we derive that
\begin{equation}
\label{eq:pnderivativebound}
p_n^{(a_1, \dots, a_d)}(y, t_2, \dots, t_d) \ll \Big(\Big(\frac{X^2}{Y}\Big)^n + \Big(\frac{1}{Y^{1/3}}\Big)^n \Big) 
\frac{X^{a_1 + \dots + a_d}}{Z^{a_1} X_2^{a_2} \dots X_d^{a_d}}.
\end{equation}
Since $Y/X^2 \geq R \geq 1$, this gives an asymptotic expansion in $n$ as $R \rightarrow \infty$ (as in \cite{BKY}), and each $p_n$ is $X$-inert in all variables.  Therefore, $F$ is also $X$-inert in all variables (again, viewing $y$ as an independent variable).

As a final step we need to incorporate the fact that $t_0$, which is substituted for $y$, is not an independent variable but rather a function of $t_2, \ldots, t_d$.  We may derive the shape of a general derivative of $F$ as follows.  Let ${\bf a} = (a_2, \dots, a_d)$, ${\bf j} = (j_2, \dots, j_d)$, ${\bf k} = (k_2, \dots, k_d)$, and ${\bf b}_i$ be $(d-1)$-tuples of nonnegative integers.
We claim 
\begin{multline}
\label{eq:Fchainruleformula}
 \frac{\partial^{a_2+ \dots + a_d}}{ \partial t_2^{a_2} \dots  \partial t_d^{a_d} } F(t_0, t_2, \dots, t_d)  
 =
\sum_{{\bf j} + {\bf k} \leq {\bf a}}
\thinspace
\sum_{N \leq j_2 + \dots + j_d}
\thinspace
\sum_{{\bf b}_1 + \dots + {\bf b}_N + {\bf k} = {\bf a}}
c_{{\bf j}, {\bf k},  {\bf b}_1, \dots, {\bf b}_N}
\\
\times F^{(j_2 + \dots + j_d, k_2, \dots, k_d)}
t_0^{({\bf b}_1)} \dots t_0^{({\bf b}_N)}
,
\end{multline}
where the condition ${\bf j} + {\bf k} \leq {\bf a}$ is interpreted componentwise (so $j_{\ell} + k_{\ell} \leq a_{\ell}$ for all $\ell$), and the $c_{*}$'s are absolute constants.  
Moreover, we emphasize that the notation $F^{(j_2 + \dots + j_d, k_2, \dots, k_d)}$ here and below represents partial differentation of $F$ with $y$ viewed as an independent variable.
Once one guesses this shape of expression, it is not difficult to verify it using induction.

Using \eqref{eq:Fchainruleformula}, \eqref{eq:pnderivativebound}, and \eqref{eq:t0derivativeproperty}, we derive
\begin{equation*}
 \frac{\partial^{a_2+ \dots + a_d}}{ \partial t_2^{a_2} \dots  \partial t_d^{a_d} } F(t_0, t_2, \dots, t_d) 
 \\
 \ll 
 \max
 \frac{X^{j_2 + \dots + j_d + k_2 + \dots + k_d}}{Z^{j_2 + \dots + j_d} X_2^{k_2} \dots X_d^{k_d}}
 \frac{Z^N}{\prod_{\ell=2}^{d} X_{\ell}^{({\bf b}_1 + \dots + {\bf b}_N ) \cdot {\bf e}_{\ell}} },
\end{equation*}
where the maximum is over ${\bf j} + {\bf k} \leq {\bf a}$, $N \leq j_2 + \dots + j_d$, ${\bf b}_1 + \dots + {\bf b}_N + {\bf k} = {\bf a}$. 
Since $N \leq j_2 + \dots + j_d$, in the $Z$-aspect, the above bound is $\ll 1$ (meaning, the exponent on $Z$ is $\leq 0$).  The power on $X$ is at most $a_2 + \dots + a_d$, and the power of $X_{\ell}$ in the denominator is $a_{\ell}$.  
Hence
\begin{equation*}
\frac{\partial^{a_2+ \dots + a_d}}{ \partial t_2^{a_2} \dots  \partial t_d^{a_d} } F(t_0, t_2, \dots, t_d)  \ll 
\Big(\frac{X}{X_2} \Big)^{a_2} \dots \Big(\frac{X}{X_d} \Big)^{a_d},
\end{equation*}
which is precisely the desired condition to show that $F$ is $X$-inert.  This completes the proof of Lemma \ref{lemma:statphase}.
\end{proof}

\begin{proof}[Proof of Lemma \ref{lemma:t0isXinert}]
 Let $f = \phi'$ (the derivative with respect to the first variable, $t_1$), so $t_0$ is defined implicitly by $f(t_0, t_2, \dots, t_d) = 0$.  Note that \eqref{eq:phiderivatives2} translates to
 \begin{equation*}
  f^{(a_1, a_2, \dots, a_d)}(t_1, t_2, \dots, t_d) \ll \frac{Y}{Z} \frac{1}{Z^{a_1}} 
  \Big(\frac{X}{X_2}\Big)^{a_2} 
  \dots 
  \Big(\frac{X}{X_d}\Big)^{a_d}.
 \end{equation*}
Likewise, the condition $\phi''(t) \gg Y/Z^2$ means
\begin{equation*}
 f^{(1,0,\dots, 0)}(t_1, t_2, \dots, t_d) \gg \frac{Y}{Z^2}.
\end{equation*}
 
 Implicit differentiation gives
 \begin{equation*}
  t_0^{({\bf e}_{j_0})} = - 
  \frac{f^{({\bf e}_{j_0})}}
    {f^{({\bf e}_{1})}},
 \end{equation*}
where $j_0 \in \{ 2, 3, \dots, d \}$, and ${\bf e}_j$ denotes the $j$-th standard basis vector.  From this, we easily deduce 
$t_0^{({\bf e}_{j_0})} \ll Z \frac{X}{X_{j_0}}$,  consistent  with $\frac{1}{Z}t_0$ being $X$-inert.  Now we proceed inductively to treat arbitrary derivatives.  Let ${\bf a} = (a_2, \dots, a_d)$.  We have
\begin{equation*}
 t_0^{({\bf a} + {\bf e}_{j_0})} = - 
 \frac{\partial^{a_2+ \dots + a_d}}{ \partial t_2^{a_2} \dots  \partial t_d^{a_d} } \frac{f^{({\bf e}_{j_0})}(t_0, t_2, \dots, t_d)}
    {f^{({\bf e}_{1})}(t_0, t_2, \dots, t_d)}.
\end{equation*}
As shorthand, let $g = f^{({\bf e}_{j_0})}$, and $h = f^{({\bf e}_{1})}$.  By \eqref{eq:Fchainruleformula}, we have
\begin{multline*}
 \frac{\partial^{a_2+ \dots + a_d}}{ \partial t_2^{a_2} \dots  \partial t_d^{a_d} } \Big(\frac{g}{h}\Big) 
 =
\sum_{{\bf j} + {\bf k} \leq {\bf a}}
\sum_{N \leq j_2 + \dots + j_d}
\sum_{{\bf b}_1 + \dots + {\bf b}_N + {\bf k} = {\bf a}}
c_{{\bf j}_2, \dots, {\bf j}_d, {\bf k}, N, {\bf b}_1, \dots, {\bf b}_N}
\\
\times \Big(\frac{g}{h}\Big)^{(j_2 + \dots + j_d, k_2, \dots, k_d)}
t_0^{({\bf b}_1)} \dots t_0^{({\bf b}_N)}.
\end{multline*}
Note the total ``degree'' of any ${\bf b}_i$ is at most that of ${\bf a}$, so our inductive hypothesis gives the desired bound for these $t_0^{({\bf b}_i)}$.

We claim that
\begin{equation}
\label{eq:goverhderivatives}
 \Big(\frac{g}{h}\Big)^{(\alpha, k_2, \dots, k_d)}
 \ll
 Z \frac{X}{X_{j_0}} \frac{1}{Z^{\alpha}} \Big(\frac{X}{X_2}\Big)^{k_2} \dots \Big(\frac{X}{X_d}\Big)^{k_d}.
\end{equation}
Taking this for granted for a moment, we derive
\begin{multline*}
 t_0^{({\bf a} + {\bf e}_{j_0})} \ll 
 \sum_{{\bf j} + {\bf k} \leq {\bf a}}
\sum_{N \leq j_2 + \dots + j_d}
\sum_{{\bf b}_1 + \dots + {\bf b}_N + {\bf k} = {\bf a}}
\\
Z \frac{X}{X_{j_0}} \frac{1}{Z^{j_2 + \dots + j_d}} \Big(\frac{X}{X_2}\Big)^{k_2} \dots \Big(\frac{X}{X_d}\Big)^{k_d}
Z^N \prod_{\ell = 2}^{d} \Big(\frac{X}{X_{\ell}}\Big)^{({\bf b}_1 + \dots + {\bf b}_N) \cdot {\bf e}_{\ell}},
\end{multline*}
using the inductive hypothesis.  This simplifies as 
\begin{equation*}
t_0^{({\bf a} + {\bf e}_{j_0})} \ll Z \frac{X}{X_{j_0}} \Big(\frac{X}{X_2}\Big)^{a_2} \dots \Big(\frac{X}{X_d}\Big)^{a_d},
\end{equation*}
as desired.

Now we prove the claim \eqref{eq:goverhderivatives}.
The generalized product rule gives
\begin{equation*}
 \Big(\frac{g}{h}\Big)^{({\bf m})} = \sum_{{\bf m}_1 + {\bf m}_2 = {\bf m}} c_{{\bf m}_1, {\bf m}_2} g^{({\bf m}_1)} \Big(\frac{1}{h}\Big)^{({\bf m}_2)}.
\end{equation*}
Meanwhile, the derivatives of $1/h$ are given by \eqref{eq:MultivariableFinverseDerivatives}.  Therefore,
\begin{multline*}
 \Big(\frac{g}{h}\Big)^{(\alpha, k_2, \dots, k_d)}
 \ll
 \sum_{{\bf m}_1 + {\bf m}_2 = (\alpha, k_2, \dots, k_d)} 
 \frac{Y}{Z} \frac{X}{X_{j_0}} \frac{1}{Z^{{\bf m}_1 \cdot {\bf e}_1}}
 \prod_{\ell=2}^{d} \Big(\frac{X}{X_{\ell}}\Big)^{{\bf m}_1 \cdot {\bf e}_{\ell}}
 \\
 \times 
 \sum_{{\bf j}_1  + \dots + {\bf j}_{{\bf m}_2 \cdot {\bf 1}} = {\bf m}_2} 
 \frac{\Big(\frac{Y}{Z^2}\Big)^{{\bf m}_2 \cdot {\bf 1}}}{\Big(\frac{Y}{Z^2}\Big)^{{\bf m}_2 \cdot {\bf 1} + 1}}
 \frac{1}{Z^{({\bf j}_1 + \dots {\bf j}_{{\bf m}_2 \cdot {\bf 1}}) \cdot {\bf e}_1}} 
 \prod_{\ell=2}^{d} 
    \Big(\frac{X}{X_{\ell}}\Big)^{({\bf j}_1 + \dots {\bf j}_{{\bf m}_2 \cdot {\bf 1}}) \cdot {\bf e}_{\ell}}.
\end{multline*}
This simplifies to give the claimed bound.
\end{proof}

\begin{proof}[Proof of Lemma \ref{lemma:Absorbingphi''t0IntoInertFunction}]
Suppose $D \subset \mathbb{R}$ is an open interval, and $f: D \rightarrow \mathbb{R}$ and $g: \mathbb{R}^d \rightarrow D$ are smooth.  It is not hard to show that
\begin{equation}
\frac{\partial^{a_1 + \dots + a_d}}{\partial x_1^{a_1} \dots \partial x_d^{a_d} }
f(g(x_1, \dots, x_d)) 
= 
\sum_{1 \leq k \leq {\bf a} \cdot {\bf 1}}
\sum_{{\bf j}_1 + \dots + {\bf j}_k = {\bf a}} c_{{\bf j}_1, \dots, {\bf j}_k} f^{(k)}(g) g^{({\bf j}_1)} \dots g^{({\bf j}_k)},
\end{equation} 
for certain constants $c_{*}$.  If $f$ is fixed, and $g$ is part of an $X$-inert family of functions, then one may easily deduce that $f \circ g$ is also an $X$-inert family.

In the present context, we take $g = \frac{Z^2}{Y} \phi''(t_0, t_2, \dots t_d)$, which forms an $X$-inert family of functions, by the previous lemmas. Since $\phi'' \asymp \frac{Y}{Z^2}$ by assumption, the image of $g$ is contained in a fixed open interval of positive reals (not including $0$).  We may take $f(u) = u^{-1/2}$, which is smooth on the image of $g$.
\end{proof}

\providecommand{\bysame}{\leavevmode\hbox to3em{\hrulefill}\thinspace}
\providecommand{\MR}{\relax\ifhmode\unskip\space\fi MR }
\providecommand{\MRhref}[2]{%
  \href{http://www.ams.org/mathscinet-getitem?mr=#1}{#2}
}
\providecommand{\href}[2]{#2}

\bigskip

\end{document}